\newcommand*\Z{\mathbb{Z}}
\newcommand{\athir}[2]{\displaystyle \prod_{C_{#2}\subset G} H^{#1}( C_{#2},M)}
\newtheorem{De}{Definition}[section]
\newtheorem{Th}[De]{Theorem}
\newtheorem{Pro}[De]{Proposition}
\newtheorem{Le}[De]{Lemma}
\newtheorem{Co}[De]{Corollary}
\newtheorem{Rem}[De]{Remark}
\numberwithin{equation}{subsection}
\def\xto#1{\xrightarrow[]{#1}}
\newcommand{\w}{\mbox{\tiny $\wedge$ } }
\def\t{\otimes }
\def\la{\lambda }
\def\1{^{-1}}
\def \hom{\mathop{\sf Hom}\nolimits}
\def \ext{\mathop{\sf Ext}\nolimits}
\begin{document}

\title{Symmetric cohomology of groups}

\author[M. Pirashvili]{Mariam  Pirashvili}
\address{
Department of Mathematics\\
University of Southampton\\
University Road\\
Southampton\\
SO17 1BJ} \email{mp2m15@soton.ac.uk}
\thanks{The research was supported by the EPSRC grant EP/N014189/1 Joining the dots: from data to insight.}

\maketitle

\begin{abstract}  We investigate the relationship between the symmetric, exterior and classical cohomologies of groups. The first two theories were introduced respectively by Staic and  Zarelua. We show in particular, that there is a map from exterior cohomology to symmetric cohomology which is a split monomorphism in general and an isomorphism in many cases, but not always. We introduce two spectral sequences which help to explain the realtionship between these cohomology groups. As a sample application we obtain that symmetric and classical cohomologies are isomorphic for torsion free groups.  \end{abstract}

%
%
{\bf AMS classification:} 20J06 18G40.

\section{Introduction}
Let $G$ be a group and $M$ be a $G$-module. In order to better understand 3-algebras arising in lattice field theory \cite{triangle}, Staic defined a variant of group cohomology, which he denoted by $HS^*(G,M)$ and called \emph{symmetric cohomology of groups} \cite{staic}. Some aspects of this theory were later extended by Singh \cite{singh} and Todea \cite{todea}. There is an obvious natural transformation from the symmetric cohomology to the classical Eilenberg-MacLane cohomology
$$\alpha^n:HS^n(G,M)\to H^n(G,M), \  \ n\geq 0.$$
According to \cite{staic},\cite{staic_h2},  $\alpha^n$ is an isomorphism if $n=0,1$ and is a monomorphism for $n=2$. By Corollary 2.3 in \cite{staic_h2} we know that $\alpha^2$ is an isomorphism if $G$ has no elements of order two. 

Ten years prior to this, Zarelua had also defined a version of group cohomology, denoted by $H_\la^*(G,M)$ and called \emph{exterior cohomology of groups} \cite{zarelua}. It also comes together with a natural transformation 
$$\beta^n:H_\la^n(G,M)\to H^n(G,M),$$
with similar properties. The exterior cohomology  has the following striking property: If $G$ is a finite group of order $d$, then $H^i_\lambda(G,M)=0$ for all $i\geq d.$

The aim of this work is to obtain more information about homomorphisms $\alpha^* $ and $\beta^*$. We construct a natural transformation $\gamma^n:H_\la^n(G,M)\to HS^n(G,M)$ such that the following diagram commutes:
$$\xymatrix{H_\la^n(G,M)\ar[rr]^{\gamma^n}\ar[dr]_{\beta^n}&& HS^n(G,M)\ar[dl]^{\alpha^n}\\
&H^n(G,M).}$$
Our results in Section \ref{comp} show that the homomorphism $\gamma^n: H_\la^n(G,M)\to HS^n(G,M)$ is a split monomorphism in general, and an isomorphism in certain cases, namely if $0\leq n\leq 4,$ or $M$ has no elements of order two.
In general, $\gamma^5$ is not an isomorphism.

Our next results are related to the homomorphism $\beta^n:H_\la^n(G,M)\to H^n(G,A)$. We construct a spectral sequence for which $\beta^n$ are edge homomorphisms, $n \geq 0$. As any first quadrant spectral sequence, it gives a 5-term exact sequence (see for example \cite[Exercise 5.1.3]{weibel}) which has the following form:
$$0\to H_{\lambda}^2(G,M)\xto{\beta^2} H^2(G,M)\to \prod_{C_2\subset G}H^2(C_2,M)\to H^3_{\lambda}(G,M)\xto{\beta^3} H^3(G,M).$$
Here the product is taken over all subgroups of order two. The exactness at $H^2(G,M)$ is an answer to Problem 25 by Singh in \cite{problems}. At the very end of Section $4$ in \cite{staic}, Staic wondered about the injetivity of the map $\alpha^3$ under the assumption that $G$ has no elements of order $2$. A trivial consequence of our spectral sequence says that, if $G$ has no elements of order two, then one has an exact sequence:
$$0\to H_{\lambda}^3(G,M)\xto{\beta^3} H^3(G,M)\to \prod_{C_3\subset G}H^3(C_3,M)\to H^4_{\lambda}(G,M)\xto{\beta^4} $$ 
$$\xto{\beta^4}  H^4(G,M)\to  \prod_{C_3\subset G}H^4(C_3,M)\to H^5_{\lambda}(G,M)\xto{\beta^5} H^5(G,M).$$
In particular, if $G$ has no elements of order two and three, then $H^i_\lambda(G,M)=H^i(G,M)$, for $i=0,1,2,3,4$. 
 
Among other consequences of our spectral sequence, we mention the following: if $G$ is a torsion free group, then $\beta^n:H^n_\lambda(G,M)\to H^n(G,M)$ is an isomorphism for all $n\geq 0$.
 
The paper is organised as follows: In Section 2 we recall the definitions of the symmetric and exterior cohomologies. In the next section we construct the transformation $\gamma^*$ and prove our first result, which shows that $\gamma^n$ is quite often an isomorphism, but not always. In the final section we construct a spectral sequence and we prove our main result Theorem \ref{e2}.
           
\section{Preliminaries}

\subsection{Classical cohomology} Let $G$ be a group and $M$ be a $G$-module. One way to define the cohomology $H^*(G,M)$ is via cochains, as $H^*(C^*(G,M))$. The group of $i$-cochains of $G$ with coefficients in $M$ is the set of functions from $G^i$ to $M$:
$$C^i(G,M)=\left\{\phi:G^i\to M\right\}.$$
The $i^{th}$ differential $\partial^i:C^i(G,M)\to C^{i+1}(G,M)$ is the map
\begin{align*}
\partial^i(\phi)(g_0,g_1,\cdots ,g_i)&=g_0\cdot \phi(g_1,\cdots ,g_i)\\
&+\sum_{j=1}^i(-1)^j\phi(g_0,\cdots ,g_{j-2},g_{j-1}g_j,g_{j+1},\cdots , g_i)\\
&+(-1)^{i+1}\phi(g_0,\cdots ,g_{i-1}).
\end{align*}
Given a chain complex such as this one, one can define its normalised subcomplex. In each dimension $n$, define $NC^n(G,M)$ to be the group of $n$-cochains which satisfy the normalisation condition
$$\phi(g_0,\cdots,g_{i-1},1,g_{i+1},\cdots,g_n)=0, \quad i=0,\cdots,n.$$
The canonical inclusion $\iota:NC^*(G,M)\to C^*(G,M)$ is a chain equivalence \cite{homology}.

Another way to define $H^*(G,M)$ is via projective resolutions, as $\emph{ H}^* (K^* (G,M))$. The standard projective resolution of $\Z$ by $G$-modules is the sequence of $G$-module homomorphisms \cite{aw}
$$\cdots \to \Z[G^{i+1}] \xrightarrow{\partial_{i-1}}\Z[G^i] \to\cdots \to\Z[G]\xrightarrow{\epsilon}\Z,$$
where
$$\partial_{i-1}(g_0,\cdots ,g_i)=\sum_{j=0}^i(-1)^j(g_0,\cdots ,g_{j-1},g_{j+1},\cdots , g_i),$$
and the mapping $\epsilon$ sends each generator $(g)$ to $1\in\Z$. 
An element of $$K^i(G,M)=\hom_G(\Z[G^{i+1}],M)$$ is then a function $f:G^{i+1}\to M$ such that
$$f(sg_0,sg_1,\cdots ,sg_i)=s\cdot f(g_0,g_1,\cdots,g_i).$$
The maps
$$K^i(G,M)\xrightarrow{\psi^i} C^i(G,M)$$
defined by
$$\psi^i(f)(g_1,\cdots , g_i)=f(1,g_1,g_1g_2, \cdots ,g_1g_2\cdots g_i)$$
induce an isomorphism of cochain complexes $K^*(G,M)\to C^*(G,M)$ \cite{aw}. Moreover, one has a commutative diagram
$$\xymatrix{K^*(G,M)\ar[r]^\psi& C^*(G,M)\\ NK^*(G,M)\ar[r]_\psi\ar[u]& NC^*(G,M)\ar[u]}$$
where the  horizontal maps are isomorphisms and the vertical maps are inclusions and homotopy equivalences. Here $NK^i(G,M)$ consists of such maps $f\in K^i(G,M)$ that
$$f(x_0,\cdots, x_i)=0, \ \ {\rm if} \ x_{j}=x_{j+1}, \ {\rm for} \ 0\leq j<n.$$ 
Thus
$$H^*(G,M)=H^*(NC^*(G,M))=H^*(C^*(G,M))=H^*(K^*(G,M))=H^*(NK^*(G,M)).$$
\subsection{Symmetric cohomology}
We now discuss a subcomplex of $C^*(G,M)$ introduced by Staic in \cite{staic}  and \cite{staic_h2}. It is based on an action of $\Sigma_{n+1}$ on $C^n(G,M)$ (for every $n$) compatible with the differential. In order to define this action, it is enough to define how the transpositions $\tau_i = (i,i + 1)$,
$1 \leq i \leq n$ act. For $\phi \in C^n(G,M)$ one defines:
$$(\tau_i \phi)(g_1, g_2, g_3, \cdots , g_n) = \begin{cases}  -g_1\phi(g_1^{-1}, g_1g_2, g_3,\cdots , g_n), \quad {\rm if}  \ i=1,\\ 
-\phi(g_1, \cdots , g_{i-2}, g_{i-1}g_i, g_i^{-1}, g_ig_{i+1},\cdots , g_n),&  1 < i < n,\\
-\phi(g_1, g_2, g_3,\cdots , g_{n-1}g_n, g_n^{-1}) , \quad  {\rm if}  \ i=n.
\end{cases}$$
Denote by  $CS^n(G,M)$ the subgroup of   the invariants of this action. That is, $CS^n(G,M)= C^n(G,M)^{\Sigma_{n+1}}$.
Staic proved  that $CS^*(G,M)$ is a subcomplex of $C^*(G,M)$ \cite{staic}, \cite{staic_h2}. 
\begin{De}The  homology of this subcomplex is called the symmetric cohomology of $G$ with coefficients in $M$ and is denoted by $HS^n(G,M)$.
\end{De}
\begin{Rem} There is a natural map  $\alpha^n:HS^n(G,M)\to H^n(G,M)$ induced by the inclusion  $CS^*(G,M)\hookrightarrow C^*(G,M)$.
\end{Rem}
\subsection{Exterior powers}
In order to define the chain complex introduced by Zarelua \cite{zarelua} we need to recall some facts about exterior powers.
\begin{De}
The exterior algebra $\Lambda^*(A)$ of an abelian group $A$ is a quotient algebra of the tensor algebra $T^*(A)$ with respect to the two-sided ideal generated by the elements of the form $a\otimes a \in T^2(A)=A\otimes A$.
\end{De}
A weaker version of this, denoted by $\tilde \Lambda^*(A)$, can be defined as the quotient algebra of the tensor algebra $T^*(A)$ with respect to the two-sided ideal generated by the elements of the form $a\otimes b +b\otimes a \in T^2(A)$.
Since $$a\t b +b\t a=(a+b)\t (a+b)-a\t a-b\t b,$$
it is clear that  one has the canonical quotient maps
$$\t^{n}(A)\twoheadrightarrow \tilde \Lambda^n(A)  \twoheadrightarrow  \Lambda^n(A).$$
Denote by $\Delta^n(A)$ the kernel of the projection $\tilde \Lambda^n(A)  \twoheadrightarrow  \Lambda^n(A)$. Thus we have a short exact sequence
$$0\to \Delta^n(A)\to \tilde \Lambda^n(A)  \twoheadrightarrow  \Lambda^n(A)\to 0.$$
Clearly $\Lambda^1(A)=A=\tilde\Lambda ^1(A)$. Hence
\begin{equation}\label{del1}
\Delta^1(A)=0.
\end{equation}

The images of $a_1\t\cdots \t a_n\in\t^{n}A$ in $\tilde \Lambda^n(A) $ and $\Lambda^n(A)$ are denoted  by
$a_1 \tilde \wedge \cdots \tilde \wedge a_n$ and $a_1  \wedge \cdots  \wedge a_n$ respectively. 
Recall that if $A=\Z[S]$ is a free abelian group with a set $S$ as basis, then $\t^{n}A$ is a free abelian group with basis elements $s_1\t\cdots\t s_n$, where $s_i\in S$. It is also well-known that $\Lambda^n(A)$ is a free abelian group with basis elements $s_1\wedge \cdots \wedge s_n$, where $s_1<\cdots <s_n$. Here $<$ is a total order on $S$.

In $\tilde \Lambda^n(A) $, $A=\Z[S]$, things are a  bit more complicated because of the relation $2a\tilde \wedge a=0$, which is a consequence of the relation $a\tilde \wedge b+b\tilde \wedge a=0$. It implies that $\Delta^n(A)$ is an  $\mathbb{F}_2$-vector space.
The epimorphism  $ \tilde \Lambda^n (\Z[S]) \to \Lambda^n(\Z[S])$ has a splitting given by  $s_1\wedge \cdots \wedge s_n\mapsto s_1\tilde \wedge \cdots \tilde \wedge s_n$. Here $s_1,\cdots,s_n$ are distinct elements in $S$. Thus 
\begin{equation}\label{tildasdas}
\tilde{\Lambda}^n(\Z[S])\cong \Lambda ^n(\Z[S])\oplus \Delta^n(\Z[S]), 
\end{equation}
Thus expressions of the form $s_1\tilde \wedge \cdots \tilde \wedge s_n$, where $s_1\leq \cdots \leq s_n$, are canonical generators of $\tilde\Lambda^n(\Z[S])$. Among these elements, ones with strict inequalities $s_1< \cdots < s_n$ form a basis of the summand corresponding to the free abelian group part, while the rest form a basis of the $\mathbb{F}_2$-vector space  $\Delta^n(\Z[S])$.

\subsection{Exterior cohomology of groups} We now discuss a subcomplex of $K^*(G,M)$, denoted by $K^*_{\lambda}(G,M)$, introduced by Zarelua in \cite{zarelua}. 

According to Lemma 3.1 in \cite{zarelua}, there is a differential $$\partial:\Lambda^{n+1}(\Z[G])\to\Lambda^n(\Z[G])$$ in the exterior algebra generated by $\Z[G]$ given by
$$\partial(g_0\wedge \cdots\wedge g_n)=\sum_{i=0}^n(-1)^{i+1}g_0\wedge \cdots\wedge \hat{g_i}\wedge\cdots\wedge g_n,$$
where, as usual, the hat  \ $\hat{}$ \ denotes a missing value. The group $G$ acts on this chain complex  by:$$g(g_1\wedge g_2\wedge\cdots\wedge g_n)=gg_1\wedge gg_2\wedge\cdots\wedge gg_n.$$
\begin{De}
The homology groups of the cochain complex (denoted by $K^*_\la(G,M)$)
$$\hom_G(\Lambda^1\Z[G],M)\xrightarrow{\partial}\hom_G(\Lambda^2\Z[G],M)\xrightarrow{\partial}\cdots\xrightarrow{\partial} \hom_G(\Lambda^n\Z[G],M)\xrightarrow{\partial}\cdots$$
are called the exterior cohomology groups of the group $G$ with coefficients in $M$ and are denoted by $H^n_{\lambda}(G,M)$. 
\end{De}

Therefore, $K^*_\la(G,M)$ is the subcomplex of $K^n (G,M)$ of all $G$-maps $f\in K^n(G,M)$ such that
$$f(g_0,\cdots,g_i,g_i,\cdots,g_n)=0, $$
and
$$f(g_0,\cdots,g_i,g_{i+1},\cdots,g_n)=- f(g_0,\cdots,g_{i+1},g_i,\cdots,g_n),$$
for all $0\leq i< n$.
\begin{Rem} There is a natural transformation $\beta^n:H_\la^n(G,M)\to H^n(G,M)$ induced by the inclusion  $K_\la^*(G,M)\hookrightarrow K^*(G,M)$.
\end{Rem}

\begin{Rem}\label{26} Let $G$ be a finite group of order $d$. Since $\Z[G]$ is a free abelian group of rank $d$, we have  $\Lambda^i\Z[G]=0$, for $i>d$ and $H^n_\lambda(G,M)=0$ for $n\geq d$. On the other hand, as we will see later, the groups $HS^n(C_2,M)$ are nontrivial for infinitely many $n$.
\end{Rem}

\section{Comparison of symmetric and exterior cohomologies}\label{comp}
\subsection{Construction of the map $\gamma$}
We need two more complexes: $C_\la^*(G,M)$ and $KS^*(G,M)$. They are defined as follows.
\begin{De}
Let $KS^n (G,M)$ denote the subcomplex of $K^n (G,M)$ of all $G$-maps $f\in K^n(G,M)$ such that
\begin{equation}\label{KS}f(g_0,\cdots,g_i,g_{i+1},\cdots,g_n)=-f(g_0,\cdots,g_{i+1},g_i,\cdots,g_n)\end{equation}
for all $0\leq i< n$.
\end{De}
So we have the following subcomplexes:
$$K_\la^*(G,M) \hookrightarrow KS^*(G,M) \hookrightarrow K^*(G,M).$$
\begin{De}
Let $C^n_{\lambda}(G,M)$ be the complex defined by $$C^*_\la(G,M)= CS^n(G,M) \cap NC^*(G,N)$$
Thus $\phi\in  CS^n(G,M)$ belongs to $C^n_{\lambda}(G,M)$ if
$$\phi(x_1,\cdots, 1,\cdots,x_n)=0.$$
\end{De}
This subcomplex has already been considered by \cite{todea}, who showed that if $M$ has no elements of order $2$, then $C^n_{\lambda}(G,M) = CS^n(G,M)$ for all $n$. We will later prove the same fact in a different way.

We have the following subcomplexes:
$$C_\la^*(G,M)  \hookrightarrow  CS^*(G,M)  \hookrightarrow C^*(G,M).$$
In order to understand the relationship between all these complexes it is useful to rewrite them in terms of resolutions, which we constructed in Lemma \ref{3res} below.

Since $\Z[G^i]=\Z[G]^{\t i}$, the standard projective resolution can be rewritten as
$$\cdots \to \Z[G]^{\t i}\to \Z[G]^{\t i-1}\to\cdots \to \Z[G]^{\t 2} \to \Z[G].$$
If one replaces the tensor algebra by either version of the exterior algebra, one still obtains a resolution, though in general no longer a projective one. This is the subject of the following lemma.
\begin{Le}\label{3res} One has a commutative diagram of resolutions of $\Z$:
$$\xymatrix{\cdots \ar[r]& \Z[G]^{\t i}\ar[r]\ar[d]& \Z[G]^{\t i-1}\ar[r]\ar[d]&\cdots\ar[r] &  \Z[G]^{\t 2} \ar[r]\ar[d]& \Z[G]\ar[d]^{Id}\\
\cdots \ar[r]& \tilde\Lambda^i\Z[G] \ar[r]\ar[d] & \tilde\Lambda^{i-1}\Z[G]\ar[r]\ar[d]&\cdots\ar[r] & \tilde\Lambda^2 \Z[G] \ar[r]\ar[d] & \Z[G]\ar[d]^{Id}\\
\cdots \ar[r]& \Lambda^i\Z[G] \ar[r]& \Lambda^{i-1}\Z[G]\ar[r]&\cdots\ar[r] & \Lambda^2 \Z[G] \ar[r]& \Z[G]}$$
\end{Le}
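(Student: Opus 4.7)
\medskip
\noindent\textbf{Proof plan.}
The plan is in three steps: (a) verify that the bar differential descends to both exterior quotients, which automatically makes the vertical maps into chain maps; (b) produce a uniform contracting homotopy proving exactness of each augmented complex; (c) observe that commutativity of the diagram is then immediate.

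For (a), I would check that $\partial$ preserves the defining two-sided ideals of $\tilde\Lambda^*\Z[G]$ and $\Lambda^*\Z[G]$. Applied to a generator of the ideal of the form $\cdots\otimes a\otimes b\otimes\cdots + \cdots\otimes b\otimes a\otimes\cdots$, the terms of $\partial$ that delete a coordinate away from the $\{a,b\}$ pair assemble into a sum of the same symmetric shape, while the two terms deleting either $a$ or $b$ coincide up to sign and cancel. The analogous computation for the relation $a\otimes a$ handles the passage to $\Lambda^*$. This is a routine Koszul-sign bookkeeping exercise.

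For (b), I would transport the standard bar contracting homotopy $s(x)=1\otimes x$ to the two quotients by setting $s(x)=1\,\tilde\wedge\, x$ and $s(x)=1\wedge x$ respectively. These are well-defined group homomorphisms by bilinearity of the product, and the identity $\partial s + s\partial = \id$ in positive degrees follows in each setting from the same short computation: the leading term of $\partial s(x)$ removes the inserted $1$, contributing $x$, while the remaining terms reassemble to $-s(\partial x)$ after reindexing. Paired with the augmentation $g\mapsto 1$ (which is insensitive to the quotient since it factors through the degree-one part, where no relation is imposed), this shows each row is a resolution of $\Z$.

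The main obstacle, really the only subtle point, is (a): making sure the signs in the Koszul expansion cancel correctly modulo each of the two exterior relations. Once this is in hand, the vertical arrows are quotient maps of differential graded algebras, so commutativity of the diagram is automatic and the lemma follows. An alternative route to (b), if one prefers to avoid the homotopy computation inside $\tilde\Lambda^*$, is to use the short exact sequence of complexes $0\to\Delta^*(\Z[G])\to\tilde\Lambda^*(\Z[G])\to\Lambda^*(\Z[G])\to 0$ together with Zarelua's Lemma 3.1 (acyclicity of the bottom row) and reduce to showing acyclicity of $\Delta^*$, but the direct homotopy argument is cleaner because it treats all three rows in parallel.
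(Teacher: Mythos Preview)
Your proposal is correct and follows essentially the same route as the paper: the paper also exhibits the contracting homotopy $h(g_0\wedge\cdots\wedge g_i)=1\wedge g_0\wedge\cdots\wedge g_i$ and checks $h\partial+\partial h=\mathrm{Id}$, treating $\tilde\Lambda^*$ by the identical computation. You are somewhat more careful than the paper in spelling out step~(a) (that $\partial$ descends through the two ideals), which the paper simply takes for granted via the citation to Zarelua's Lemma~3.1.
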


One denotes these resolutions by $(T^{*+1}(\Z[G]),\partial)$, $(\tilde \Lambda^{*+1}(\Z[G]), \partial)$ and $(\Lambda^{*+1}(\Z[G]),\partial)$ respectively.

\begin{proof} 
In this proof, take $\partial_{-1} = \epsilon:\Z[G]\to \Z$. We only present the proof for $\Lambda^*$, as the proof for $\tilde \Lambda^*$ is similar.
We construct a homomorphism $h:\Lambda^i\Z[G]\to\Lambda^{i+1}\Z[G]$ by the formula
$$h(g_0\wedge\cdots\wedge g_i)=1\wedge g_0\wedge\cdots\wedge g_i.$$
To show that this is a contracting homotopy, we need to check that $h\circ \partial +\partial\circ h = Id_{\Lambda^i\Z[G]}$.
Indeed, we have
\begin{align*}
\partial_i\circ h_i(g_0\wedge\cdots\wedge g_i)&=g_0\wedge\cdots\wedge g_i -\sum^i_{j=0}(-1)^j1\wedge g_0\wedge\cdots\wedge \hat{g_j}\wedge\cdots\wedge g_i\\
&=g_0\wedge\cdots\wedge g_i - h_{i-1}\circ\partial_{i-1}(g_0\wedge\cdots\wedge g_i).
\end{align*}
\end{proof}

\begin{Le}\label{34mai} The differential $\partial: \tilde\Lambda^{n+1}(\Z[G])\to \tilde\Lambda^n(\Z[G])$ sends $\Delta^{n+1}(\Z[G])$ to $\Delta^n(\Z[G])$. Moreover, it is compatible with the decompostion (\ref{tildasdas}). Hence 
$$(\tilde{\Lambda}^{*+1}(\Z[G]),\partial)\cong (\Delta^{*+1}(\Z[G]),\partial)\oplus (\Lambda^{*+1}(\Z[G]),\partial)$$
and $H_*(\Delta^{*+1}(\Z[G]),\partial)=0$.
\end{Le}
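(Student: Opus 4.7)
The plan is to reduce the lemma to showing that both the projection $\pi\colon \tilde\Lambda^n(\Z[G])\twoheadrightarrow \Lambda^n(\Z[G])$ and its canonical splitting $\iota\colon \Lambda^n(\Z[G])\hookrightarrow \tilde\Lambda^n(\Z[G])$ (described after (\ref{tildasdas})) are chain maps with respect to $\partial$. Once this is established, everything else follows formally from Lemma~\ref{3res}.

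The first assertion that $\partial$ sends $\Delta^{n+1}(\Z[G])$ into $\Delta^n(\Z[G])$ is immediate from the right half of the diagram in Lemma~\ref{3res}: the vertical maps there express precisely that $\pi$ commutes with $\partial$, so $\Delta^{*+1}(\Z[G])=\ker \pi$ is automatically a subcomplex of $\tilde\Lambda^{*+1}(\Z[G])$.

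Next I would verify that the splitting $\iota$ is a chain map. Define $\iota(s_1\wedge\cdots\wedge s_n)=s_1\tilde\wedge\cdots\tilde\wedge s_n$ on strictly ordered generators $s_1<\cdots<s_n$ and extend by linearity; this is consistent because both $\wedge$ and $\tilde\wedge$ are antisymmetric, so sign changes on both sides agree under permutations. The verification $\partial\iota=\iota\partial$ is then a one-line calculation: both sides applied to $s_1\wedge\cdots\wedge s_n$ yield
\[
\sum_{j=0}^{n}(-1)^{j+1}\, s_1\tilde\wedge\cdots\tilde\wedge \hat{s_j}\tilde\wedge\cdots\tilde\wedge s_n,
\]
since the differentials on $\Lambda^{*+1}(\Z[G])$ and $\tilde\Lambda^{*+1}(\Z[G])$ are given by formally identical formulas. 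Combining this with the first step, the splitting (\ref{tildasdas}) upgrades to a direct-sum decomposition of chain complexes
\[
(\tilde{\Lambda}^{*+1}(\Z[G]),\partial)\cong (\Delta^{*+1}(\Z[G]),\partial)\oplus (\Lambda^{*+1}(\Z[G]),\partial).
\]

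For the final acyclicity assertion I would simply appeal once more to Lemma~\ref{3res}: it says that both $\tilde\Lambda^{*+1}(\Z[G])$ and $\Lambda^{*+1}(\Z[G])$ are resolutions of $\Z$, so both have homology concentrated in degree zero, equal to $\Z$. Taking homology of the direct-sum decomposition degree by degree, the $\Z$ summands in degree zero match under $\pi$, and (\ref{del1}) gives $\Delta^1(\Z[G])=0$ so there is nothing in degree zero of $\Delta^{*+1}(\Z[G])$ anyway; in all positive degrees both outer terms vanish, forcing $H_*(\Delta^{*+1}(\Z[G]),\partial)=0$.

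The main (mild) obstacle is the chain-map verification for $\iota$ in the middle step: one must simultaneously check that the choice of splitting is independent of the chosen total order on $G$ and that it commutes with $\partial$. Both points reduce to the antisymmetry of $\tilde\wedge$ and the identical form of the two differentials, so no delicate combinatorics is required.
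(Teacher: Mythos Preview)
Your proposal is correct and follows essentially the same approach as the paper's proof: both use Lemma~\ref{3res} to see that $\pi$ is a chain map (hence $\Delta^{*+1}$ is a subcomplex) and then verify that the splitting $g_1\wedge\cdots\wedge g_n\mapsto g_1\tilde\wedge\cdots\tilde\wedge g_n$ commutes with the identical boundary formulas. The only cosmetic difference is the order: the paper first deduces $H_*(\Delta^{*+1})=0$ from the fact that $\pi$ is a quasi-isomorphism (via the long exact sequence of the short exact sequence $0\to\Delta\to\tilde\Lambda\to\Lambda\to 0$), and only afterwards checks the splitting, whereas you establish the direct-sum decomposition first and then read off acyclicity from it.
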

\begin{proof} By Lemma \ref{3res} the canonical projection $\tilde \Lambda^{*+1}(\Z[G]) \to \Lambda^{*+1}(\Z[G]))$ is a chain map, inducing an isomorphism in homology, hence  $\Delta^{*+1}(\Z[G])$ is a chain subcomplex with trivial homology. To finish the proof, it suffices to note that the map $g_1\wedge \cdots \wedge g_n\mapsto g_1\tilde \wedge \cdots \tilde \wedge g_n$, $g_1,\cdots,g_n\in G$, commutes with differentials and hence defines a splitting of chain complexes.

\end{proof} 

\begin{Le}\label{315} After  applying the functor $\hom_{\Z[G]}(-,M)$ to the resolutions in Lemma \ref{3res} one obtains the following diagram
$$\xymatrix{\hom_{\Z[G]}(\Lambda^*(\Z[G]),M) \ar[r] \ar[d]^{=} & \hom_{\Z[G]}(\tilde\Lambda^*(\Z[G]),M) \ar[r]\ar[d]^=&  \hom_{\Z[G]}(T^*(\Z[G]),M)  \ar[d]^{=}\\
K_\la^*(G,M)\ar[d]^\psi\ar[r]& KS^*(G,M)\ar[r]\ar[d]^\psi & K^*(G,M)\ar[d]^\psi\\
C_\la^*(G,M)\ar[r]& CS^*(G,M) \ar[r] & C^*(G,M)}$$
where all horizontal arrows are inclusions and vertical arrows are isomorphisms.
\end{Le}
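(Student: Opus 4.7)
The plan is to verify the diagram column by column. The top row identifications follow from tensor-hom adjunction combined with the universal properties of the quotient constructions $\tilde\Lambda$ and $\Lambda$, after which the only substantive claim is that the standard isomorphism $\psi\colon K^*(G,M)\to C^*(G,M)$ restricts to isomorphisms on the relevant subcomplexes.

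For the top row, since $T^{n+1}(\Z[G])=\Z[G^{n+1}]$, tensor-hom adjunction gives $\hom_{\Z[G]}(T^{n+1}(\Z[G]),M)=K^n(G,M)$. A $\Z[G]$-map out of the quotient $\tilde\Lambda^{n+1}(\Z[G])$ corresponds to a $G$-equivariant map $f\colon\Z[G^{n+1}]\to M$ that annihilates the defining relations of the form $\cdots\t g_i\t g_{i+1}\t\cdots+\cdots\t g_{i+1}\t g_i\t\cdots$, which is precisely the antisymmetry condition (\ref{KS}) defining $KS^n(G,M)$. Imposing the further relation $a\t a=0$ that characterises $\Lambda^{n+1}(\Z[G])$ adds the requirement that $f$ vanish on tuples with adjacent equal entries, giving exactly $K_\la^n(G,M)$. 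The induced horizontal maps in the top row are inclusions because they are dual to surjective quotient maps, and the horizontal inclusions in the middle and bottom rows are immediate from the definitions.

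For the bottom row, $\psi$ is already known to be an isomorphism; we must show it restricts to bijections $KS^n\cong CS^n$ and $K_\la^n\cong C_\la^n$. The key point is that $\psi$ intertwines the evident sign-permutation action of $\Sigma_{n+1}$ on $K^n$ (whose invariants are $KS^n$) with Staic's action on $C^n$ (whose invariants are $CS^n$). Setting $x_j=g_1\cdots g_j$ with $x_0=1$, one computes the value of $f$ on the permuted tuple and then uses $G$-equivariance to renormalise the first entry back to $1$, re-expressing the result as $\psi(f)$ at a new tuple. For $i=1$, swapping $x_0=1$ and $x_1=g_1$ yields
$$f(g_1,1,g_1g_2,\ldots)=g_1\cdot f(1,g_1\1,g_2,g_2g_3,\ldots)=g_1\cdot\psi(f)(g_1\1,g_1g_2,g_3,\ldots,g_n),$$
which matches Staic's formula for $\tau_1$ up to the sign coming from antisymmetry. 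Analogous computations handle $1<i<n$ and $i=n$. Since adjacent transpositions generate $\Sigma_{n+1}$, it follows that $\psi(KS^n)=CS^n$. For the $\lambda$-subcomplex, observe that under the substitution $x_j=g_1\cdots g_j$ the condition $x_{j-1}=x_j$ is equivalent to $g_j=1$; hence vanishing on adjacent-equal tuples corresponds under $\psi$ to the normalisation condition, giving $\psi(K_\la^n)=CS^n\cap NC^n=C_\la^n$.

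The main obstacle is purely bookkeeping: the change-of-variables verification for Staic's $\tau_i$ must be carried out in the three separate cases $i=1$, $1<i<n$, and $i=n$, with careful tracking of the sign inherited from the defining relations of $\tilde\Lambda$. Once this matching is in hand, commutativity of the diagram is automatic since every map is either an identity, an inclusion, or the same $\psi$.
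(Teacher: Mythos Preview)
Your proof is correct and follows essentially the same approach as the paper: both identify the key point as showing that $\psi$ carries $KS^n$ isomorphically onto $CS^n$, and both verify this by checking that the adjacent-transposition generators $\tau_i$ correspond, under the change of variables $x_j=g_1\cdots g_j$, to swapping adjacent entries of $f$ (with the sign coming from antisymmetry). The paper works via $\psi^{-1}$ starting from $\phi\in CS^n$, whereas you work via $\psi$ starting from $f\in KS^n$, and you spell out the normalisation correspondence for the $\lambda$-column a bit more explicitly, but these are cosmetic differences.
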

\begin{proof} A key point is to show that restricting $\psi$ on $KS^*(G,M)$ yields an isomorphism between $KS^*(G,M)$ and $CS^*(G,M)$.
To this end, take  $\phi\in CS^n(G,M)$. Then
$$f(g_0,\cdots,g_n)=(\psi^n)^{-1}(\phi)(g_0,g_1,\cdots,g_n)=g_0\cdot\phi(g_0^{-1}g_1,g_1^{-1}g_2,\cdots,g_{n-1}^{-1}g_n).$$
The equation $\phi(g_1, g_2,g_3,\cdots , g_n) = -g_1\phi(g_1^{-1}, g_1g_2, g_3,\cdots , g_n)$ translates to
\begin{align*}
f(g_0,\cdots,g_n)&= g_0(\psi^n)^{-1}(\phi)(g_0^{-1}g_1,\cdots,g_{n-1}^{-1}g_n)\\
&=-g_0\cdot g_0^{-1}g_1\cdot (\psi^n)^{-1}(\phi)((g_0^{-1}g_1)^{-1},g_0^{-1}g_1g_1^{-1}g_2,\cdots,g_{n-1}^{-1}g_n)\\
&=-g_1\cdot (\psi^n)^{-1}(\phi)(g_1)^{-1}g_0,g_0^{-1}g_2,\cdots,g_{n-1}^{-1}g_n)\\
&= -f(g_1,g_0,\cdots,g_n).
\end{align*}
In a similar way, the other equations above give the condition \ref{KS} for $n>0$.
\end{proof}
If one passes to cohomology, one obtains the homomorphisms
$$\xymatrix{H^*_\la(G,M)\ar[r]\ar[dr]_\gamma & H^*( KS^*(G,M))\ar[d]^{\cong}\ar[r] &H^*(G,M)\\ & HS^*(G,M)\ar[ru]_\alpha &}$$
and the commutativity of the diagram in Lemma \ref{315} shows that $\beta=\alpha\gamma$. 
\begin{Pro}\label{zdast} The homomorphism $\gamma^n: H_\la^n(G,M)\to HS^n(G,M)$ is a split monomorphism. Moreover, it is an isomorphism provided  $M$ has no elements of order two.
\end{Pro}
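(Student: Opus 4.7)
The plan is to deduce this essentially for free from Lemmas \ref{315} and \ref{34mai}. By Lemma \ref{315}, the inclusion $K_\la^*(G,M)\hookrightarrow KS^*(G,M)$ is naturally identified with the map
$$\hom_{\Z[G]}(\Lambda^{*+1}(\Z[G]), M) \to \hom_{\Z[G]}(\tilde\Lambda^{*+1}(\Z[G]), M)$$
induced by the canonical projection $\tilde\Lambda^{*+1}(\Z[G]) \twoheadrightarrow \Lambda^{*+1}(\Z[G])$, and the cohomology of the target computes $HS^*(G,M)$ via the vertical isomorphism $\psi$. So it suffices to exhibit a $G$-equivariant chain-level splitting of this projection and then apply $\hom_{\Z[G]}(-,M)$.

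Such a splitting is already provided by Lemma \ref{34mai}: its section $g_1\wedge\cdots\wedge g_n \mapsto g_1\tilde\wedge\cdots\tilde\wedge g_n$ is manifestly $G$-equivariant for the diagonal action, and commutes with the differentials. Applying $\hom_{\Z[G]}(-,M)$ therefore yields a direct-sum decomposition of cochain complexes
$$KS^*(G,M) \;\cong\; K_\la^*(G,M) \;\oplus\; \hom_{\Z[G]}(\Delta^{*+1}(\Z[G]), M),$$
in which the inclusion $K_\la^*(G,M)\hookrightarrow KS^*(G,M)$ is the first-summand inclusion. Passing to cohomology immediately shows that $\gamma^n$ is a split monomorphism, with explicit complementary summand $H^n\bigl(\hom_{\Z[G]}(\Delta^{*+1}(\Z[G]), M)\bigr)$.

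For the second claim, suppose $M$ has no elements of order two. As recorded in the discussion following \eqref{tildasdas}, $\Delta^{n+1}(\Z[G])$ is an $\mathbb{F}_2$-vector space. Any $\Z[G]$-linear map from an $\mathbb{F}_2$-module into $M$ must take values in the $2$-torsion of $M$, which is trivial by hypothesis. Hence $\hom_{\Z[G]}(\Delta^{n+1}(\Z[G]), M)=0$ for all $n$, the complementary summand vanishes, and $\gamma^n$ is an isomorphism.

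There is no genuine obstacle in this argument; the only thing that requires a moment of verification is the $G$-equivariance of the splitting from Lemma \ref{34mai}, which is immediate from its definition on basis elements. All the technical content has been absorbed into the earlier lemmas.
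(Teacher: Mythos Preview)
Your argument is correct. The first part is exactly the paper's approach: invoke the $G$-equivariant chain-level splitting of Lemma~\ref{34mai} (together with the identification of Lemma~\ref{315}) and apply $\hom_{\Z[G]}(-,M)$ to obtain a direct-sum decomposition of cochain complexes whose first summand is $K_\la^*(G,M)$.

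For the second part your route differs slightly from the paper's. The paper argues directly that $K_\la^*(G,M)=KS^*(G,M)$ when $M$ has no $2$-torsion: given $f\in KS^n(G,M)$, setting $x_i=x_{i+1}$ in the antisymmetry relation gives $2f(x_0,\dots,x_i,x_i,\dots,x_n)=0$, hence $f(x_0,\dots,x_i,x_i,\dots,x_n)=0$, so $f\in K_\la^n(G,M)$. You instead show that the complementary summand $\hom_{\Z[G]}(\Delta^{*+1}(\Z[G]),M)$ vanishes, using that $\Delta^{*+1}(\Z[G])$ is an $\mathbb{F}_2$-vector space. The two arguments are essentially dual to one another and equally short; your version has the minor advantage of reusing the decomposition already set up, and it dovetails neatly with the paper's subsequent remark that $H_\delta^*(G,M)=0$ under the same hypothesis.
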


\begin{proof}The first part follows from Lemma \ref{34mai}. Assume $M$ has no elements of order two. It suffices to show that $K_\la^*(G,M)=KS^*(G,M)$. Take an element $f\in KS^n(G,M)$. Then we have
 $$f(x_0,\cdots,x_i,x_{i+1},\cdots, x_n)=- f(x_0,\cdots,x_{i+1},x_{i},\cdots, x_n),$$
 for all $0\leq i<n$. If  $x_i=x_{i+1}$, one obtains $2f(x_0,\cdots,x_i,x_{i},\cdots, x_n)=0$ and hence $f(x_0,\cdots,x_i,x_{i},\cdots, x_n)=0$. This implies that $f\in K^n_\la(G,M)$ and the proof is finished.
\end{proof}

\subsection{$\delta$-cohomology} In order to state the realtionship between the exterior and symmetric cohomology we need to introduce new groups.
\begin{De} For a group $G$ and a $G$-module $M$ one defines the $\delta$-homology $H^*_\delta(G,M)$ by
$$H^*_\delta(G,M)=H^*(\hom_{\Z[G]}(\Delta^{*+1}(\Z[G]),M)).$$
\end{De}
Since $\Delta^n(\Z[G])$ is an $\mathbb{F}_2$-vector space, it follows that the groups $H^n_\delta(G,M)$ are also  $\mathbb{F}_2$-vector spaces, $n\geq 0$. The importance of these groups comes from the fact that
\begin{equation}\label{hs=hl+hd}
HS^n(G,M)\cong H_\lambda^n(G,M)\oplus H^n_\delta(G,M)
\end{equation}
which is a trivial consequence of Lemma \ref{34mai}. It follows from Proposition \ref{zdast} that if $M$ has no elements of order two, then $H_\delta^*(G,M)=0$.

\subsection{Preliminaries on spectral sequences}\label{hhs}

To state our main result of this section, let us  recall the construction of the hypercohomology spectral sequences.  These spectral sequences will also play a prominent role in the next section.   

Let $G$ be a group and $M$ be a left $G$-module.  For any chain complex of left $G$-modules $C_*=(C_0\leftarrow C_1\leftarrow\cdots)$ one defines $\ext_{\Z[G]}^*(C_*,N)$ to be the homology of the total complex of the bicomplex $\hom_{\Z[G]}(C_*,I^*)$, where $I^*$ is an injective resolution of $M$.

There exist two spectral sequences.
Both of them abut to the group $\ext_{\Z[G]}^*(C_*,M)$. They are:
$$\mathbf{I}^{pq}_1 = \ext_{\Z[G]}^q(C_p,N)\quad \Longrightarrow \quad \ext_{\Z[G]}^{p+q}(C_*,M),$$
$$\mathbf{II}^{pq}_2 = \ext_{\Z[G]}^p(H_q(C_*),M)\quad \Longrightarrow \quad \ext_{\Z[G]}^{p+q}(C_*,M).$$
We also need the following easy lemma on spectral sequences
\begin{Le}\label{abut0} Assume a spectral sequence 
abuts to zero and $E_2^{pq}=0$ if $q<0$ or $p<k$, where $k$ is a fixed integer. Then $$E_2^{k\,0}=0=E^{k+1\,0}_2.$$ 
\end{Le}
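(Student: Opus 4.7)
The plan is to exploit the fact that both positions $(k,0)$ and $(k+1,0)$ sit at the corner of the support of the $E_2$-page: every differential touching them on every later page lands in, or comes from, the prescribed zero region, so the entries there stabilise immediately to $E_\infty$, which then must vanish because the spectral sequence abuts to zero.

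First I would observe that, since each $E_r^{pq}$ is a subquotient of $E_2^{pq}$ for $r\geq 2$, the hypothesis propagates to every later page: $E_r^{pq}=0$ whenever $q<0$ or $p<k$, for all $r\geq 2$.

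Next I would inspect the differentials at the two positions of interest. Fix $p\in\{k,k+1\}$ and $r\geq 2$. The outgoing map $d_r\colon E_r^{p,0}\to E_r^{p+r,\,1-r}$ has target with second coordinate $1-r<0$, hence lands in the zero region. The incoming map $d_r\colon E_r^{p-r,\,r-1}\to E_r^{p,0}$ has source with first coordinate $p-r$; for $p=k$ this is at most $k-2<k$ and for $p=k+1$ this is at most $k-1<k$, so in both cases the source lies in the zero region as well. Consequently $E_r^{p,0}$ is independent of $r$ for $r\geq 2$, giving $E_2^{p,0}=E_\infty^{p,0}$ for $p\in\{k,k+1\}$, and both right-hand sides are zero since the spectral sequence abuts to zero.

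There is no genuine obstacle here; the proof is a bookkeeping check in bidegrees. The only mildly subtle point is why the statement covers \emph{two} consecutive columns rather than just one: the first differential that could possibly hit $(k+1,0)$ is $d_2\colon E_2^{k-1,1}\to E_2^{k+1,0}$, and the hypothesis $E_2^{pq}=0$ for $p<k$ is precisely what is needed to kill its source, ensuring that the $(k+1,0)$ spot — and not only the $(k,0)$ spot — already stabilises at the $E_2$-page.
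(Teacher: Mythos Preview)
Your argument is correct and is the standard bidegree check. The paper does not actually supply a proof of this lemma---it is stated as an ``easy lemma on spectral sequences'' and left to the reader---so your verification is exactly the routine computation the authors intend.
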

\subsection{Vanishing of $\delta$-cohomology in low dimensions}
Now we can state the main result of this section:
\begin{Th}  Let $G$ be a group and $M$ be a $G$-module. Then
$$H^i_\delta(G,M)=0, \quad {\rm for} \quad 0\leq i\leq 4.$$ 
Hence $\gamma^i:H^i_\lambda(G,M)\to HS^i(G,M)$ is an isomorphism for $i=0,1,2,3,4$.
\end{Th}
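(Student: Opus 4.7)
The plan is to use the first hypercohomology spectral sequence from Section \ref{hhs} applied to the acyclic complex $(\Delta^{*+1}(\Z[G]),\partial)$ of Lemma \ref{34mai}:
$$\mathbf{I}^{pq}_1=\ext^q_{\Z[G]}(\Delta^{p+1}(\Z[G]),M)\ \Longrightarrow\ \ext^{p+q}_{\Z[G]}(\Delta^{*+1},M)=0,$$
the vanishing of the abutment coming from the second hypercohomology spectral sequence together with the acyclicity of $\Delta^{*+1}$. Since $E_2^{p,0}=H^p_\delta(G,M)$, it suffices to show $E_2^{p,0}=0$ for $0\le p\le 4$; the isomorphism $\gamma^i:H^i_\lambda\to HS^i$ will then follow from the splitting (\ref{hs=hl+hd}).

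The idea is to iterate Lemma \ref{abut0}, progressively filling in more of the bottom rows of $E_2$. Because $\Delta^1=0$ (equation \ref{del1}), one has $E_1^{0,q}=0$ for every $q$, and the lemma with $k=1$ immediately gives $H^1_\delta=H^2_\delta=0$. Next, I compute $\Delta^2(\Z[G])\cong \mathbb{F}_2[G]$ (the $\Z[G]$-module of rank one generated by $1\tilde\wedge 1$) and $\Delta^3(\Z[G])\cong \mathbb{F}_2[G\times G]$ with diagonal action (a free $\mathbb{F}_2[G]$-module of rank $|G|$, realised by the Frobenius-type map $(a,b)\mapsto a^{2}b$). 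Both admit a two-term resolution $0\to\Z[G^{\bullet}]\xrightarrow{\cdot 2}\Z[G^{\bullet}]\to\mathbb{F}_2[G^{\bullet}]\to 0$ by free $\Z[G]$-modules, so $E_1^{1,q}$ and $E_1^{2,q}$ vanish for $q\ge 2$ and consist of $M[2]$, $M/2M$ (and their $G$-indexed products) in degrees $q=0,1$. A direct inspection, using $\partial(\{1,1,g\})=\{1,1\}$ for every $g\in G$, shows that $d_1:E_1^{1,q}\to E_1^{2,q}$ is the diagonal (constant-function) inclusion, hence injective. Therefore $E_2^{1,q}=0$ for every $q$, and Lemma \ref{abut0} with $k=2$ now yields $H^3_\delta=0$.

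The remaining case $H^4_\delta=0$ is the main obstacle, and will require showing $E_2^{2,q}=0$ for every $q\ge 0$ so that Lemma \ref{abut0} can be applied once more with $k=3$. The difficulty is that $\Delta^4(\Z[G])$ is no longer $\mathbb{F}_2[G]$-free: the orbit of a multiset $\{1,1,x,x\}$ has stabiliser $\langle x\rangle$ whenever $x^{2}=1$, contributing summands $\mathbb{F}_2[G/\langle x\rangle]$ whose $\ext^q_{\Z[G]}(-,M)$ involves all of $H^{*}(\langle x\rangle,M)$. The critical rigidity that rescues the argument is that the distinct classes $\{1^3,g^2\}$ and $\{1^2,g^2,h\}$ of $\Delta^5$ both map to the same ``bad'' class $\{1^2,g^2\}\in\Delta^4$ under $\partial$, which forces the value of any cocycle on that orbit to vanish and makes the kernel of $d_1:E_1^{2,q}\to E_1^{3,q}$ match precisely the image of $d_1:E_1^{1,q}\to E_1^{2,q}$. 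Once this is in hand, Lemma \ref{abut0} delivers $H^4_\delta=0$, and the splitting (\ref{hs=hl+hd}) translates the collective vanishing into the claimed isomorphism $\gamma^i:H^i_\lambda(G,M)\to HS^i(G,M)$ for $0\le i\le 4$.
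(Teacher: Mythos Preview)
Your overall strategy---applying the hypercohomology spectral sequence to the acyclic complex $\Delta^{*+1}(\Z[G])$ and iterating Lemma~\ref{abut0}---is exactly the paper's, and your treatment through $H^3_\delta=0$ is correct and matches the paper's: the identifications $\Delta^2\cong\mathbb{F}_2[G]$, $\Delta^3\cong\mathbb{F}_2[G\times G]$, the two-term free resolutions, and the diagonal inclusion $d_1:E_1^{1,q}\hookrightarrow E_1^{2,q}$ are all as in the paper.

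The gap is in your final paragraph. To obtain $H^4_\delta=0$ via Lemma~\ref{abut0} with $k=3$ you need $E_2^{2,q}=0$ for $q\ge 1$, i.e.\ that the kernel of $d_1:E_1^{2,1}\to E_1^{3,1}$ equals the constants. This differential is induced by $\partial:\Delta^4\to\Delta^3$, not by $\partial:\Delta^5\to\Delta^4$, so your discussion of classes in $\Delta^5$ mapping to the ``bad'' orbit $\{1^2,g^2\}\in\Delta^4$ is analysing the wrong map; at best it constrains $E_2^{3,*}$, which is not what you need. Moreover, the non-free orbits $\{1,1,x,x\}$ of $\Delta^4$ are a red herring here: they only enlarge the \emph{target} $E_1^{3,1}=\ext^1_{\Z[G]}(\Delta^4,M)$, and enlarging the target of a map cannot enlarge its kernel.

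The paper bypasses all of this by observing that $(g,h)\mapsto g\tilde\wedge g\tilde\wedge g\tilde\wedge h$ is a split monomorphism $\mathbb{F}_2[G\times G]\hookrightarrow\Delta^4$. Hence $Maps(G,M/2M)$ is a direct summand of $E_1^{3,1}$, and one checks from $\partial(g\tilde\wedge g\tilde\wedge g\tilde\wedge h)=g\tilde\wedge g\tilde\wedge h + g\tilde\wedge g\tilde\wedge g$ that the composite $E_1^{2,1}\xrightarrow{d_1} E_1^{3,1}\twoheadrightarrow Maps(G,M/2M)$ sends $\tau$ to $g\mapsto\tau(g)-\tau(1)$. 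The kernel of this composite is exactly the constants, so $\ker(d_1)\subseteq Im(E_1^{1,1}\to E_1^{2,1})$ and $E_2^{2,1}=0$. No analysis of the full structure of $\Delta^4$ or of $\Delta^5$ is required.
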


\begin{proof} In the hypercohomology spectral sequence we take
$C_*=(\Delta^{*+1}(\Z[G]),\partial).$ Since $H_*(C_*)=0$, the spectral sequence $\mathbf{II}$ gives $\ext_{\Z[G]}^*(C_*,M)=0$. Thus, the spectral sequence $\mathbf{I}$ has the form
$$E^{pq}_1=\ext_{\mathbb{Z}[G]}^q(\Delta^{p+1}(\Z[G]),M)\quad\Longrightarrow 0.$$
Since $E^{p0}_1=\hom_{\Z[G]}(\Delta^{p+1}(\Z[G]),M),$
we see that $$E_2^{*0}=H^*_\delta(G,M).$$ According to  (\ref{del1}) we have $E^{pq}_1=0$ for $p< 1$. It follows from Lemma \ref{abut0} that $E^{i\,0}_2=H^i_\delta(G,M)=0$ for $i\leq 2$. Thus by  the same Lemma it suffices to show that
 $E^{1,q}_2=0=E^{2,q}_2$ if $q>0$.
 
One checks that the following diagram of $G$-modules commutes:
$$\xymatrix{\Delta^2(\Z[G])&\Delta^3(\Z[G])\ar[l]_{\partial^1}& \Delta^4(\Z[G])\ar[l]_{\partial^2}\\
\mathbb{F}_2[G] \ar[u]_{\psi_1}& \mathbb{F}_2[G\times G] \ar[l]_{\delta_1}\ar[u]_{\psi_2}& \mathbb{F}_2[G\times G] \ar[l]_{\delta_2}\ar[u]_{\psi_3}
}$$ 
where
$$\psi_1(g)=g\tilde\wedge g, \quad \psi_2(g,h)=g\tilde\wedge g\tilde\wedge h, \quad \psi_3(g,h)=g\tilde\wedge g\tilde\wedge g\tilde\wedge h,$$
$$\delta_1(g,h)=g, \quad \delta_2(g,h)=(g,h)-(g,g).$$
Since  the set  of elements $\{s\tilde \wedge s|s\in G\}$, (resp.  $\{s\tilde\wedge s\tilde \wedge t| s,t\in G\}$) forms an $\mathbb{F}_2$-basis of $\Delta^2(\Z[G])$ (resp. $\Delta^3(\Z[G])$), the $G$-homomorphism $\psi_1$ (resp. $\psi_2$) is an isomorphism. In general, the $G$-homomorphism $\psi_3$ is not an isomorphism, but only a split monomorphism. 
Hence the projective resolutions
$$0\to \Z[G]\xto{2}\Z[G]\to \mathbb{F}_2[G]\to 0 \quad {\rm and} \quad 0\to \Z[G\times G]\xto{2}\Z[G\times G]\to \mathbb{F}_2[G\times G]\to 0$$
can be used to compute  $\ext_{\mathbb{Z}[G]}^q(\Delta^{2}(\Z[G]),M)$ and $\ext_{\mathbb{Z}[G]}^i(\Delta^{3}(\Z[G]),M)$. In both cases $$Ext_{\Z[G]}^i(\Delta^2(\Z[G]),M)=0=Ext_{\Z[G]}^i(\Delta^3(\Z[G]),M) \quad {\rm if} \quad i>1.$$
Hence $E^{1,q}_1=0=E^{2,q}_1$ if $q>1$. The first projective resolution gives 
$$E^{11}_1=Ext_{\Z[G]}^1(\Delta^2(\Z[G]),M)=N,$$
where $N= M/2M.$ Since $\Z[G\times G]=\oplus _{g\in G}\Z[G]$ as a $G$-module, the second projective resolution gives
$$E^{21}_1=Ext_{\Z[G]}^1(\Delta^3(\Z[G]),M)=\\ Maps(G,N).$$
Moreover, it also shows that the group $ Maps(G,N)$ is a direct summand of $Ext_{\Z[G]}^1(\Delta^4(\Z[G]),M)$. It follows that there is an isomorphism of chain complexes
$$\xymatrix{E^{01}_1\ar[r]^{\partial^0}&E^{11}_1\ar[r] ^{\partial^1}&E^{21}_1\ar[r]^{\partial^2}& E^{31}_1\\
0\ar[r]^{\delta^0}\ar[u]&N \ar[r] ^{\delta^1} \ar[u]_{\psi^*_1}& Maps(G,N)\ar[r]^{\delta^2}\ar[u]_{\psi^*_2}& X\oplus Maps(G,N)\ar[u]_{\psi^*_3}
}$$ 
for some $X$, where $(\delta^1(n))(g)=n$,  $\delta^2=\begin{pmatrix}x\\ \delta'\end{pmatrix}$ for some $x$ and 
$(\delta'(\tau))(g)=\tau(g)-\tau(1)$. Since $\delta^1$ is a monomorphism, it follows that $E^{1,1}_2=0$. And as $Ker(\delta')=Im(\delta^1)$, we obtain that $E^{2,1}_2=0$ and the proof is finished.
\end{proof}
Now we give an example which shows that $\gamma^n$, $n\geq 5$ is not an isomorphism in general.
\subsection{The symmetric and exterior cohomologies of $C_2$} Let $G=C_2=\{1,t\}$, $t^2=1$ be the cyclic group of order two. In this section we compute both symmetric and exterior cohomologies of $C_2$. The computation of the exterior cohomology is extremely easy. In fact, for $G=C_2$, the resolution $(\Lambda ^*(\Z[G]),\partial)$  has the following form:
$$\xymatrix{ \cdots \ar[r]&0\ar[r] &\Lambda^3(\Z[C_2])\ar[r]^\partial \ar[d]^\cong\ar[r]&\Lambda^2(\Z[C_2])\ar[r]^\partial \ar[d]^\cong &  \Z[G]\\
&&0& \Z[C_2]/(1+t)&}$$
where $\partial =(1-t)$.
So,
$$H_\la^n(C_2,M)=\begin{cases} H^n(C_2,M), \quad {\rm if} \ n=0,1, \\ 0, \quad {\rm else.}\end{cases}$$ 
For the symmetric cohomology one has the following result:
\begin{Le} For $G=C_2$ and $M=\mathbb{F}_2$ with trivial action of $G$ on $M$, one  has
\begin{equation*}
HS^i(C_2,\mathbb{F}_2) = \begin{cases}
\mathbb{F}_2, & \quad {\rm if} \quad i=0, \quad {\rm or}\quad  i\equiv 1 \quad mod\quad 4,\\
0, & \quad {\rm else}.
\end{cases}
\end{equation*}
\end{Le}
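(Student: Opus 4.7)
The plan is to split the computation via the decomposition $HS^n(C_2,\mathbb{F}_2)\cong H^n_\lambda(C_2,\mathbb{F}_2)\oplus H^n_\delta(C_2,\mathbb{F}_2)$ supplied by (\ref{hs=hl+hd}). The exterior summand has just been computed: it gives $\mathbb{F}_2$ in degrees $0$ and $1$ and vanishes in every other degree. So the task reduces to showing
\[
H^n_\delta(C_2,\mathbb{F}_2)=\begin{cases}\mathbb{F}_2 & \text{if $n\equiv 1\pmod 4$ and $n\geq 5$,}\\ 0 & \text{otherwise.}\end{cases}
\]

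To this end I would make the complex $\hom_{\Z[C_2]}(\Delta^{*+1}(\Z[C_2]),\mathbb{F}_2)$ completely explicit. For $m\geq 3$ an $\mathbb{F}_2$-basis of $\Delta^m(\Z[C_2])$ is given by
\[
b_j^{(m)}=\underbrace{1\tilde\wedge\cdots\tilde\wedge 1}_{j}\tilde\wedge\underbrace{t\tilde\wedge\cdots\tilde\wedge t}_{m-j},\qquad 0\leq j\leq m,
\]
while for $m=2$ only the two elements $b_0^{(2)}=t\tilde\wedge t$ and $b_2^{(2)}=1\tilde\wedge 1$ lie in $\Delta^2$ (the element $1\tilde\wedge t$ sits in the free $\Lambda^2$-summand). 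The involution $t$ acts by $t\cdot b_j^{(m)}=b_{m-j}^{(m)}$, and reducing the boundary formula of Lemma \ref{3res} modulo $2$ yields
\[
\partial b_j^{(m)}=j\,b_{j-1}^{(m-1)}+(m-j)\,b_j^{(m-1)}\pmod{2}.
\]
Dualising, a cochain in degree $n$ is the same datum as a $t$-symmetric tuple $(x_0,\ldots,x_{n+1})\in\mathbb{F}_2^{n+2}$ with $x_j=x_{n+1-j}$, and the coboundary is $(dx)_j=j\,x_{j-1}+(n+2-j)\,x_j\pmod{2}$.

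The rest is a dimension count. The parities of $j$ and $n+2-j$ split $d^n$ into two regimes: for $n$ odd one obtains $(dx)_j=x_{2\lfloor j/2\rfloor}$, so $\ker d^n$ is cut out by the vanishing of the even-indexed coordinates; for $n$ even one obtains $(dx)_j=0$ at even $j$ and $(dx)_j=x_{j-1}+x_j$ at odd $j$, so $\ker d^n$ is cut out by equality on consecutive pairs. Comparing $\dim\ker d^n$ with $\dim\im d^{n-1}$ while keeping track of how the symmetry $x_j=x_{n+1-j}$ collapses roughly half of the formally listed relations to tautologies reveals a period-$4$ pattern: the two dimensions coincide in every degree except $n=4k+1$ with $k\geq 1$, where the kernel exceeds the image by one. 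The surviving class is represented by the indicator functional of the unique $C_2$-fixed basis vector $b_{2k+1}^{(4k+2)}$; it is a cocycle because in both positions $j=2k+1$ and $j=2k+2$ the coefficient multiplying this evaluation equals $2k+2$, which is even, and it is not a coboundary because every element of $\im d^{4k}$ evaluates to $x_{2k}+x_{2k+1}=0$ at this fixed coordinate by the symmetry relation. The main obstacle is purely organisational---checking the period-$4$ bookkeeping by direct inspection---rather than anything conceptual.
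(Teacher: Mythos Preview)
Your argument is correct and your formulas are right; the splitting $HS^*\cong H^*_\lambda\oplus H^*_\delta$ together with the Pascal-style boundary $\partial b_j^{(m)}\equiv j\,b_{j-1}^{(m-1)}+(m-j)\,b_{j}^{(m-1)}\pmod 2$ reduce the problem to the periodic dimension count you describe, and the identification of the surviving class at $n=4k+1$ with the indicator of the unique fixed basis vector $b_{2k+1}^{(4k+2)}$ is spot on (the key point being that $x_{2k}=x_{2k+1}$ under the degree-$4k$ symmetry, forcing $(d^{4k}x)_{2k+1}=0$).

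The paper proceeds differently: it does not invoke the $\lambda/\delta$ splitting but works directly with the full resolution $\tilde\Lambda^{*+1}\Z[C_2]$, decomposes each term as a $\Z[C_2]$-module into copies of $\mathbb{F}_2[C_2]$ and one copy of $\mathbb{F}_2[C_2]/(t-1)$ or $\Z[C_2]/(t+1)$ using the generators $\alpha^n_i$ and $\beta^n$, and then writes down the boundary maps as explicit $4$-periodic matrices in these generators. Your approach has the virtue of isolating the $2$-torsion piece from the outset and replacing the matrix bookkeeping by a clean parity rule, which makes the period-$4$ behaviour and the origin of the surviving class more transparent; the paper's approach, on the other hand, handles the low degrees (where $\Delta^2$ is exceptional) more uniformly and does not need to appeal to the previously computed $H^*_\lambda(C_2,-)$. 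Both ultimately amount to the same elementary linear algebra over $\mathbb{F}_2$.
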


Thus, in general, $H_{\lambda}^*(G,M)\neq HS^*(G,M).$
\begin{proof} Consider the resolution
$$\cdots \to\tilde\Lambda^3 \Z[C_2] \xrightarrow{\partial_1}\tilde\Lambda^2\Z[C_2] \xto{\partial_0}\Z[C_2].$$
Fix $n> 0$ and in $\tilde\Lambda^n\Z[C_2]$ consider the elements
$$\alpha^n_i=\underbrace{1\tilde\w\cdots\tilde\w 1}_{n-i}\tilde\w\underbrace{t\tilde\w\cdots\tilde\w t}_{i}, \quad 0\leq i< \frac{n}{2},$$
$$\beta^n=\underbrace{1\tilde\w\cdots\tilde\w 1}_{m}\tilde\w\underbrace{t\tilde\w\cdots\tilde\w t}_{m}, \quad n=2m.$$

Then $\alpha^n_i$ and $\beta^n$ generate $\tilde\Lambda^n \Z[C_2]$ as a $C_2$-module. More accurately, $\Z[C_2]$ is a free $\Z[C_2]$-module with the generator 
$\alpha^1_0$. As  a $\Z[C_2]$-module,
$$\tilde\Lambda^2\Z[C_2]=\mathbb{F}_2[C_2]\bigoplus \Z[C_2]/(t+1),$$
with $\alpha^2_0$ generating $\mathbb{F}_2[C_2]$ and $\beta^2$ generating $\Z[C_2]/(t+1)$. For odd $n$, $n=2m+1\geq 3$,
$$\tilde\Lambda^n\Z[C_2]=\mathbb{F}_2[C_2]\bigoplus\cdots\bigoplus \mathbb{F}_2[C_2],$$
with $\alpha^n_0,\cdots,\alpha^n_m$ generating each of the summands. Similarly to $n=2,$ for larger $n=2m\geq 4$ we have
$$\tilde\Lambda^n\Z[C_2]=\mathbb{F}_2[C_2]\bigoplus\cdots\bigoplus \mathbb{F}_2[C_2]\bigoplus \mathbb{F}_2[C_2]/(t-1),$$
where the $\alpha^n_0,\cdots,\alpha^n_m$ generate the $\mathbb{F}_2[C_2]$ summands and $\beta^n$ generates $\mathbb{F}_2[C_2]/(t-1)$.

Beginning from $\partial_1$, the coboundary maps are given by the matrices
\begin{equation*}
(\partial_{4k+1})_{ij} = \begin{cases}
1, & \quad if \quad i \quad is \quad odd \quad and\quad  j=i \quad or \quad j=i+1\\
0, & \quad else,
\end{cases}
\end{equation*}
where $1\leq i\leq 2k+2$, $1\leq j\leq 2k+2$,

\begin{equation*}
(\partial_{4k+2})_{ij} = \begin{cases}
1, & \quad if \quad j \quad is \quad even \quad and\quad  i=j \quad or \quad i=j-1\\
0, & \quad else,
\end{cases}
\end{equation*}
where $1\leq i\leq 2k+2$, $1\leq j\leq 2k+3$,

\begin{equation*}
(\partial_{4k+3})_{ij} = \begin{cases}
1, & \quad if \quad i \quad is \quad odd \quad and\quad  j=i\\
1,  & \quad if \quad i \quad is \quad odd \quad and\quad  j=i+1 \quad and \quad i<2k+2\\
0, & \quad else,
\end{cases}
\end{equation*}
where $1\leq i\leq 2k+3$, $1\leq j\leq 2k+3$,
\begin{equation*}
(\partial_{4k})_{ij} = \begin{cases}
1, & \quad if \quad j \quad is \quad even \quad and\quad  i=j \quad or \quad i=j-1 \quad and \quad j<2k+2\\
t-1, & \quad if \quad j=2k+2 \quad and \quad i=2k+1\\
0, & \quad else,
\end{cases}
\end{equation*}
where $1\leq i\leq 2k+1$, $1\leq j\leq 2k+2$.
Based on this the result easily follows.
\end{proof}

\section{Relationship between exterior and classical cohomology}

We start this section with the following easy and probably well-known fact. It will be used in the proof of Theorem \ref{e2} below.
\begin{Le}\label{divides}
Let $g\in G$ and $\omega=x_1\wedge \cdots\wedge x_n\in \Lambda^n\Z[G]$, where $x_1,\cdots,x_n$ are distinct elements in $G$. If $g\omega=\pm \omega$, then the order of $g$ divides $n$.
\end{Le}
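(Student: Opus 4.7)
The plan is to turn the algebraic condition $g\omega=\pm\omega$ into a set-theoretic statement about the action of $g$ on $\{x_1,\dots,x_n\}$, and then read off the divisibility from a standard orbit count.

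First I would observe that $g\omega = gx_1 \wedge \cdots \wedge gx_n$, and since left translation by $g$ is a bijection of $G$, the elements $gx_1,\dots,gx_n$ are still pairwise distinct. Recall from the discussion preceding Lemma \ref{3res} that $\Lambda^n\Z[G]$ is a free abelian group with basis $\{y_1\wedge\cdots\wedge y_n : y_1<\cdots<y_n\}$ for any chosen total order on $G$; equivalently, distinct $n$-element subsets of $G$ give (up to sign) linearly independent wedge monomials. Hence the equality $gx_1\wedge\cdots\wedge gx_n = \pm\, x_1\wedge\cdots\wedge x_n$ forces the set equality
\[
\{gx_1,\dots,gx_n\} = \{x_1,\dots,x_n\}.
\]

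Set $S:=\{x_1,\dots,x_n\}$. The previous step says that left multiplication by $g$ restricts to a permutation $\sigma$ of the finite set $S$. Next I would observe that $g$ must have finite order: the orbit $\{g^k x_1 : k\geq 0\}$ is contained in the finite set $S$, so some $g^k x_1 = g^\ell x_1$ with $k>\ell$, forcing $g^{k-\ell}=1$.

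Finally, I would compute the orbits of $\langle g\rangle$ on $S$. For any $x_i\in S$ the stabilizer is $\{g^k : g^k x_i = x_i\} = \{g^k : g^k = 1\}$, which is all of $\langle g\rangle$ raised to multiples of $|g|$; hence every orbit has size exactly equal to the order $|g|$ of $g$. Partitioning $S$ into these orbits gives $|g|\mid |S|=n$, which is the desired conclusion. There is really no substantive obstacle here; the only point that requires a moment's care is the linear-independence argument that converts $g\omega=\pm\omega$ into set equality of $\{gx_i\}$ and $\{x_i\}$, and this is immediate from the basis description of $\Lambda^n\Z[G]$.
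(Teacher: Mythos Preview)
Your proof is correct and follows essentially the same route as the paper: convert $g\omega=\pm\omega$ into the set equality $\{gx_i\}=\{x_i\}$, then use that left multiplication gives a free action of $\langle g\rangle$ on this set so all orbits have length $|g|$. You add the explicit check that $g$ has finite order and spell out the linear-independence step via the basis of $\Lambda^n\Z[G]$, both of which the paper leaves implicit, but the argument is the same.
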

\begin{proof} If one forgets the sign, it follows from the assumption that the multiplication by $g$ permutes the $n$ elements $x_1,\cdots , x_n$, meaning the cyclic group generated by $g$ acts on the set $\{x_1,\cdots,x_n\}$. The action is free, because it is given by the  multiplication in $G$. Hence all orbits will have the same length equal to the order of $g$, dividing $n$.  
\end{proof}

Now we can state our main result.

\begin{Th}\label{e2} For any group $G$ and any $G$-module $M$, there is a first quadrant  spectral sequence
  $$E^{pq}_1\Longrightarrow H^{p+q}(G,M)$$
  with properties  
  \begin{itemize} 
  \item[(i)] $E^{p,0}_2=H^p_\lambda(G,M)$ and the edge homomorphism $E^{p0}_2\to H^p(G,M)$ is precisely $\beta^p$, $p\geq 0$.
  \item[(ii)] If $q>0$, then  $E^{0q}_1=0$. 
  \item[(iii)] If $q>0$,  $p>0$ and the equation $x^{p+1}=1$ has only trivial solution in $G$, then $E^{pq}_1=0$.
 \item[(iv)] If $\ell$ is a prime number and $q>0$, then 
  $$E^{\ell-1 \, q}_1=\begin{cases} \ \prod_{C_\ell\subset G}H^{q+1}(C_\ell,M),  \ {\rm if} \ \ell=2,\\
 \ \prod_{C_\ell\subset G}H^{q}(C_\ell,M), \ {\rm if} \  \ell>2.
\end{cases}$$
Here the product is taken over all subgroups of order $\ell$ and for each such subgroup, the corresponding action of $C_\ell$ on $M$ is induced by the inclusion. 
  \end{itemize}
  \end{Th}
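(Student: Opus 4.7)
The plan is to apply the hypercohomology machinery of Section \ref{hhs} to the chain complex of $G$-modules $C_* = (\Lambda^{*+1}(\Z[G]),\partial)$ from Lemma \ref{3res}. Since $C_*$ is a resolution of $\Z$, its homology is $\Z$ in degree $0$ and $0$ elsewhere, so the spectral sequence $\mathbf{II}$ collapses to give $\ext^*_{\Z[G]}(C_*,M)=H^*(G,M)$. The spectral sequence $\mathbf{I}$ is then precisely the desired one, with $E^{pq}_1 = \ext^q_{\Z[G]}(\Lambda^{p+1}(\Z[G]),M) \Longrightarrow H^{p+q}(G,M)$.

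For (i), the $q=0$ row of $E_1$ is the complex $\hom_{\Z[G]}(\Lambda^{*+1}(\Z[G]),M) = K^*_\la(G,M)$, so $E^{p,0}_2 = H^p_\la(G,M)$, and by construction the edge homomorphism is induced by the inclusion $K^*_\la \hookrightarrow K^*$, which is $\beta^p$. Part (ii) follows at once since $\Lambda^1(\Z[G]) = \Z[G]$ is a free $\Z[G]$-module. For (iii) and (iv), the key step will be analysing the $\Z[G]$-module structure of $\Lambda^{n}(\Z[G])$ for $n = p+1$. Its $\Z$-basis is indexed by $n$-element subsets $S\subset G$ (with a chosen ordering giving a wedge $e_S$), and $G$ acts by left translation, sending $e_S$ to $\pm e_{hS}$. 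Grouping basis elements by their $G$-orbit yields a decomposition
$$\Lambda^{n}(\Z[G]) \cong \bigoplus_\mathcal{O} \mathrm{Ind}_{H_\mathcal{O}}^G \Z_{\chi_\mathcal{O}},$$
where $H_\mathcal{O}$ is the stabilizer of a chosen representative $S_0 \in \mathcal{O}$ and $\chi_\mathcal{O}:H_\mathcal{O}\to\{\pm 1\}$ records the sign with which $H_\mathcal{O}$ permutes the wedge $e_{S_0}$. By Lemma \ref{divides}, every element of $H_\mathcal{O}$ has order dividing $n$.

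Under the hypothesis of (iii) this forces every $H_\mathcal{O}$ to be trivial, hence $\Lambda^{p+1}(\Z[G])$ is $\Z[G]$-free, and $\ext^q$ vanishes for $q>0$. For (iv), take $n = \ell$ prime: nontrivial stabilizers have order exactly $\ell$. A short check shows that a subset $S$ with stabilizer $C$ of order $\ell$ must satisfy $S = Cg_0$ (it is an orbit for its own stabilizer), and that the $G$-orbits of such subsets are in bijection with the subgroups $C_\ell \subset G$ via $C \leftrightarrow G\cdot C$. The character $\chi$ is determined by the action of a generator $c\in C$ on $e_{Ce} = e\wedge c\wedge\cdots\wedge c^{\ell-1}$, which is a cyclic permutation of length $\ell$ and has sign $(-1)^{\ell-1}$; so $\chi$ is trivial for odd $\ell$ and nontrivial for $\ell=2$. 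Shapiro's lemma then gives, for $q>0$,
$$E^{\ell-1,q}_1 \cong \prod_{C_\ell\subset G} H^q(C_\ell, M_\chi),$$
which is already the desired formula for odd $\ell$. For $\ell=2$ one finishes by the identification $H^q(C_2,M^-) \cong H^{q+1}(C_2,M)$ for $q\geq 1$, visible from the $2$-periodic resolution of $\Z$ over $\Z[C_2]$ where twisting the coefficients by the sign character swaps the norm $1+c$ with $1-c$, shifting the cohomology by one. The main obstacle will be the orbit-counting and character calculation at the heart of (iv); parts (i)--(iii) then fall out with little extra work.
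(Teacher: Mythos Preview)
Your proposal is correct and follows essentially the same route as the paper: both apply the hypercohomology spectral sequences of Section \ref{hhs} to $C_*=\Lambda^{*+1}(\Z[G])$, collapse $\mathbf{II}$, and analyse $\Lambda^{p+1}(\Z[G])$ via the $G$-action on its monomial basis using Lemma \ref{divides}. The only cosmetic difference is that where you phrase the nonfree summands as induced modules $\mathrm{Ind}_{C_\ell}^G\Z_\chi$ and invoke Shapiro's lemma plus the periodicity shift $H^q(C_2,M^-)\cong H^{q+1}(C_2,M)$, the paper writes them explicitly as $\Z[G]/(g\mp 1)$ and reads off the same conclusion from the periodic free resolution over $\Z[G]$.
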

  \begin{Rem} If $p+1 $ is not prime, then $E^{pq}_1$, $q>0, p>0$ can be described as a product  (usually of several copies) of the group cohomology of subgroups of order $k$, where $k|p+1$, but the exact formula is too complex to state here. From this it is easy to deduce that $E^{pq}_1=E^{pq}_2$ for all $q>0$ (compare with the proof of the part i) of Corollary \ref{orisami}).
  \end{Rem}
\begin{proof} In the hypercohomology spectral sequence discussed in Section \ref{hhs},  we take $R=\Z[G]$, $N=M$ and $C^*=(\Lambda^{*+1}(\Z[G]),\partial)$, which  we denote simply by $\Lambda^{*+1}$.  This gives the spectral sequences
$$\mathbf{I}^{pq}_1 = \ext_G^q(\Lambda^{p+1},M)\quad \Longrightarrow \quad \ext_G^{p+q}(\Lambda^{*+1},M)$$
$$\mathbf{II}^{pq}_2 = \ext_G^p(H_q(\Lambda^{*+1}),M)\quad \Longrightarrow \quad \ext_G^{p+q}(\Lambda^{*+1},M).$$
Let us first consider the second spectral sequence. As $\Lambda^{*+1}$ is a resolution of $\Z$, we have
\begin{equation*}
H_q(\Lambda^{*+1})= \begin{cases}
\Z, & \quad {\rm for} \quad q=0,\\
0, & \quad {\rm else}.
\end{cases}
\end{equation*}
Therefore, the second spectral sequence degenerates to the isomorphism
$$\ext_G^p(\Lambda^{*+1},M)=\ext_G^p(H_0(\Lambda^{*+1}),M)=\ext_G^p(\Z,M)=H^p(G,M).$$
Substituting this value into the first spectral sequence, we obtain the spectral sequence
$$E^{pq}_1=\ext^{q}_{\Z[G]}(\Lambda ^{p+1}(\Z[G]), M)\Longrightarrow H^{p+q}(G,M).$$
Since the differential $d_1$ in the first page of the spectral sequence is induced by the boundary map in the resolution $\Lambda^{*+1}(\Z[G])\to \Z$, it follows that for $q=0$, the chain complex $(E^{p0}_1,d^1)$ coincides with the  Zarelua chain complex and the statement (i) follows.

If $p=0$, then $E^{pq}_1=Ext^q_{\Z[G]}(\Z[G],M)$  vanishes for $q>0$. Hence $E^{0q}_1=0$ for $q>0$, and  the property (ii) holds.

Next, the $G$-module $\Lambda ^{q+1}(\Z[G])$ is free as an abelian group with a basis of the form $x_1\wedge \cdots\wedge x_p$, where $x_1<\cdots<x_p$. Here $\leq$ is any total order on $G$. If one ignores the sign, we see that $G$ acts on the basis. Thus $\Lambda ^{p+1}(\Z[G])$ decomposes as a direct sum of $G$-submodules corresponding to these orbits. In particular, summands corresponding to  free orbits are free $G$-modules.  Now, if the assertion of (iii) holds, all orbits are free thanks to Lemma \ref{divides} and hence the $Ext$-group vanishes and $E^{pq}_1=0$ for $q>0$. Thus  the property (iii) is proved.

If $\ell$ is prime and $C_\ell=\{1,g,\cdots, g^{p-1}\}$ is a cyclic subgroup of $G$, then for the basis element $\omega=1\wedge g\wedge \cdots \wedge g^{\ell-1}$ one has $g\omega =\omega$ for odd $\ell$, and $g\omega =-\omega$ for $\ell=2$. Thus $\omega$ determines a non free summand of $\Lambda^{p}(\Z[G])$. This summand is isomorphic to $\Z[G]/(g-1)$ for odd $\ell$ and $\Z[G]/(g+1)$ for $\ell=2$. This summand has an obvious projective resolution
$$0\leftarrow \Z[G]/_{(g-1)}\leftarrow  \Z[G]\xleftarrow{g-1}\Z[G]\xleftarrow{1+g+\cdots g^{\ell-1}}\cdots $$
if $\ell$ is odd and 
$$0\leftarrow \Z[G]/_{(g+1)}\leftarrow  \Z[G]\xleftarrow{g+1}\Z[G]\xleftarrow{g-1}\cdots $$
if $\ell=2$. From this it follows that this summand of $\Lambda^{\ell}(\Z[G])$ contributes the factor $H^{i}(C_\ell,M)$ (resp. $H^{i+1}(C_\ell,M)$) in $Ext^{m}_{\Z[G]}(\Lambda ^{p+1}(\Z[G]), M)$ for odd $\ell$ (resp. $\ell=2$). By Lemma \ref{divides} all non-free summands of $\Lambda^{\ell}(\Z[G])$ arise in this way and hence $E^{\ell-1\,m}_1$ has the form described in
 (iv).  
\end{proof}

Thus the first plane/page of the  spectral sequence is:

\begin{tikzpicture}
\matrix (m) [matrix of math nodes,
             nodes in empty cells,
             nodes={minimum width=10ex,
                    minimum height=10ex,
                    outer sep=-5pt},
             column sep=1ex, row sep=1ex,
             text centered,anchor=center]{
    q\strut   &  0 \strut  &  \athir{q+1}{2}   & \athir{q}{3}    & \cdots & Ext^q(\Lambda^{p+1}\Z [G],M)& \cdots  \\
    \vdots&  \vdots   &  \vdots         & \vdots          & \ddots & \vdots       & \cdots \\
	2      &  0   &  \athir{3}{2}   & \athir{2}{3}    & \cdots & Ext^2(\Lambda^{p+1}\Z [G],M) & \cdots \\
	1      &  0   &  \athir{2}{2}  & \athir{1}{3}   & \cdots & Ext^1(\Lambda^{p+1}\Z [G],M) & \cdots \\
	0      &  H_\la^0(G,M) & H_\la^1(G,M) &  H_\la^2(G,M) & \cdots  &  H_\la^p(G,M) & \cdots \\
  \quad\strut &   0 &  1  &  2  &  \cdots &  p  &  \strut \\};
\draw[thick] (m-1-1.north east) -- (m-6-1.east) ;
\draw[thick] (m-6-1.north) -- (m-6-7.north east) ;
\end{tikzpicture}

As an immediate consequence of Theorem \ref{e2} one obtains the following corollary.
\begin{Co}\label{orisami}
\begin{itemize}
\item[(i)] For any group $G$ and any $G$-module $M$,  the homomorphism $\beta^i:H^i_\lambda(G,M)\to H^i(G,M)$ is an isomorphism for $i=0$ and $i=1$, while $\beta^2$ and $\beta^3$ can be fit in an exact sequence:
$$0\to H_{\lambda}^2(G,M)\xto{\beta^2} H^2(G,M)\to \prod_{C_2\subset G}H^2(C_2,M)\to H^3_{\lambda}(G,M)\xto{\beta^3} H^3(G,M).$$

\item[(ii)] If $G$ has no elements of order two, then for any $G$-module $M$, the homomorphism $\beta^2$ is an isomorphism, while $\beta^3, \beta^4$ and $\beta^5$ can be fit in an exact sequence:
$$0\to H_{\lambda}^3(G,M)\xto{\beta^3} H^3(G,M)\to \prod_{C_3\subset G}H^3(C_3,M)\to H^4_{\lambda}(G,M)\xto{\beta^4} $$
$$\xto{\beta^4} H^4(G,M)\to  \prod_{C_3\subset G}H^4(C_3,M)\to H^5_{\lambda}(G,M)\xto{\beta^5} H^5(G,M).$$

\item[(iii)] If all nontrivial elements of $G$ are of infinite order, then $\beta^i:H^i_\lambda(G,M)\to H^i(G,M)$ is an isomorphism for all $i\geq 0$.
\end{itemize}
\end{Co}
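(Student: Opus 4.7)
The three parts follow from the spectral sequence of Theorem \ref{e2} by using its vanishing pattern to compute the filtration on $H^n(G,M)$ in low degrees. Part (iii) is immediate: when $G$ is torsion-free, the equation $x^{p+1}=1$ has only the trivial solution for every $p\geq 1$, so Theorem \ref{e2}(iii) gives $E^{pq}_1=0$ for all $p,q\geq 1$; combined with $E^{0q}_1=0$ for $q>0$, the whole spectral sequence collapses onto its bottom row $E^{*,0}_2=H^*_\lambda(G,M)$, and the edge map $\beta^p$ is an isomorphism for every $p\geq 0$.

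For part (i), $E^{0q}_1=0$ for $q>0$ already forces $\beta^0$ and $\beta^1$ to be isomorphisms and $\beta^2$ to be injective, with $F^2H^2=E^{20}_\infty=H^2_\lambda$ and $H^2/F^2=E^{11}_\infty$. By the Remark following Theorem \ref{e2} one has $E^{11}_2=E^{11}_1=\prod_{C_2\subset G}H^2(C_2,M)$, and the only potentially nonzero differential out of $E^{11}_r$ is $d_2:E^{11}_2\to E^{30}_2=H^3_\lambda$, whence $E^{11}_\infty=\ker(d_2)$. The edge map $\beta^3$ factors as $H^3_\lambda\twoheadrightarrow H^3_\lambda/\im(d_2)\hookrightarrow H^3$, so splicing the resulting short exact sequences produces the displayed five-term sequence.

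For part (ii), the hypothesis of no $2$-torsion additionally forces $E^{1q}_1=0$ by Theorem \ref{e2}(iii) and, since $G$ then has no elements of order four either, $E^{3q}_1=0$ (via the Remark, which describes $E^{pq}_1$ for $p\geq 1$ as a sum of cohomologies of cyclic subgroups whose orders divide $p+1$). In the range $0<p\leq 3$ only the column $p=2$, with $E^{2q}_2=\prod_{C_3\subset G}H^q(C_3,M)$, can be nonzero for $q>0$. Analysing the filtrations on $H^3,H^4,H^5$ as above, one identifies the successive quotients with subquotients of $E^{2,n-2}_\infty$, and the kernel (resp.\ cokernel) of $\beta^n$ with the image (resp.\ cokernel) of the relevant $d_r$ landing in $H^n_\lambda$; re-indexing the $C_3$-cohomology via the Tate periodicity $H^q(C_3,M)\cong H^{q+2}(C_3,M)$ for $q\geq 1$ rewrites the middle terms in the form $\prod_{C_3\subset G}H^n(C_3,M)$.

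The main obstacle occurs in (ii) when $G$ has torsion at primes different from $2$ and $3$: then the column $p=4$ is nonzero for $q>0$, and the differentials $d_2:E^{2,q}_2\to E^{4,q-1}_2$ need not vanish, so $E^{2,q}_\infty$ is a genuine subquotient of $\prod_{C_3}H^q(C_3,M)$ rather than simply $\ker(d_3)$. Nevertheless, the arrows in the displayed sequence are canonically produced from the filtration on $H^n$ together with the spectral-sequence edge maps, and exactness is a routine consequence once the vanishing pattern above is in place.
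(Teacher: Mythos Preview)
Your arguments for (i) and (iii) are correct and essentially match the paper's. In (i) you invoke the Remark to get $E^{1,1}_2=E^{1,1}_1$, whereas the paper argues directly that $d_1:E^{1,q}_1\to E^{2,q}_1$ vanishes because the source is $2$--torsion and the target is $3$--torsion; both are fine. Your observation in (ii) that Tate periodicity $H^q(C_3,M)\cong H^{q+2}(C_3,M)$ is needed to match the displayed indices is a useful point the paper leaves implicit.

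There is, however, a genuine gap in your treatment of (ii). You correctly flag the potential obstruction $d_2:E^{2,q}_2\to E^{4,q-1}_2$, but you then dismiss it by saying that ``exactness is a routine consequence once the vanishing pattern above is in place''. This is not so: if that $d_2$ were nonzero, then $E^{2,2}_3$ would be a proper subgroup of $E^{2,2}_2$, and exactness of your sequence at $\prod_{C_3}H^4(C_3,M)$ would fail, since the cokernel of $H^4\to E^{2,2}_2$ would acquire an extra piece not mapping into $H^5_\lambda$. The resolution is immediate once seen, and it is exactly the torsion argument the paper uses in part (i): by Theorem~\ref{e2}(iv) the group $E^{2,q}_2=\prod_{C_3}H^q(C_3,M)$ is annihilated by $3$, while $E^{4,q-1}_2=\prod_{C_5}H^{q-1}(C_5,M)$ is annihilated by $5$ (here $5$ is prime, so (iv) applies), hence the differential between them is zero. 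With this observation in hand your filtration analysis goes through and yields the seven-term exact sequence as stated.
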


\begin{proof}
\begin{itemize}
\item[(i)] We first show that if $q>0$, the differential $E^{1q}_1\to E^{2q}_1$ vanishes. In fact, by part (iv) of Theorem \ref{e2} the group $E^{1q}_1$ is annihilated by the multiplication by 2, while the group $E^{1q}_1$ is annihilated by the multiplication by 3 and hence the corresponding map is zero. This fact implies that
$E^{1q}_2=E^{1q}_1$ for all $q>0$.  The rest is  a consequence of the 5-term exact sequence, which we have in any first quadrant spectral sequence.  

\item[(ii)] Assume $q>0$. By part (iii) of Theorem \ref{e2} and the fact that $G$ does not contain an element of order two, we have $E^{pq}_0=0$, if $q>0$ and $p+1$ is a power of two. It follows that $E^{pq}_2=E^{pq}_1$, for $p=2$ and hence the result.

\item[(iii)] By part (iii) of Theorem \ref{e2} we have $E^{pq}_1=0$ for all $q>0$. Hence the spectral sequence degenerates and in particular, the edge homomorphism is  an isomorphism.
\end{itemize}
\end{proof}

{\bf Example}. Let $\ell$ be a  prime number and $G=C_\ell$ be a cyclic group of order $\ell$. Then
$$H^i_{\lambda}(C_\ell,M)=\begin{cases} H^i(C_\ell,M), \ {\rm if} \ i\leq \ell-1, \\ 0, \ {\rm if } \ {i\geq \ell}.\end{cases}$$
In fact, the case when $i\geq \ell$ follows from Remark \ref{26}, while the case $i\leq \ell-1$ follows from part (iii) of Theorem \ref{e2}.

\section*{Acknowledgements}
The paper was written during the author's postdoctoral fellowship at the University of Southampton. The author would like to thank the staff of the School of Mathematics, especiallly Prof. J. Brodzki, for providing me with excellent conditions to work.

\end{document}